\newtheorem{remark}{Remark}
\newcommand{\0}{{\bm 0}}
\newcommand{\A}{\bm{A}}
\newcommand{\B}{\bm{B}}
\newcommand{\Bmatr}[1]{\begin{bmatrix}\displaystyle #1\end{bmatrix}}
\newcommand{\D}{\bm{D}}
\newcommand{\I}{\bm{I}}
\newcommand{\M}{\bm{M}}
\renewcommand{\P}{\bm{P}}
\newcommand{\Pm}{\bm{P}}
\newcommand{\Realpart}[1]{\mathrm{Re}({#1})}
\newcommand{\an}[1]{\begin{align}#1\end{align}}
\newcommand{\ab}[1]{\begin{align*}#1\end{align*}}
\newcommand{\desclist}[1]{\begin{description}#1\end{description}}
\newcommand{\bm}[1]{{\bf #1}}
\newcommand{\case}[1]{\left\{ \begin{array}{ll} #1 \end{array}\right.}  
\newcommand{\df}[2]{\displaystyle \frac{\mbox{\rm d} #1}{\mbox{\rm d} #2}}
\newcommand{\dfinline}[2]{\mbox{\rm d} #1 / \mbox{\rm d} #2}
\newcommand{\enumlist}[1]{\begin{enumerate}#1\end{enumerate}}
\newcommand{\eqdef}{:=}
\newcommand{\ev}{\bm{e}}
\newcommand{\evt}{{\,\ev^\tp}}
\newcommand{\revision}[1]{#1}
\newcommand{\goesto}{\rightarrow}
\newcommand{\hide}[1]{}	
\newcommand{\ov}{\overline}
\newcommand{\pf}[2]{\displaystyle \frac{\partial #1}{\partial #2}}
\newcommand{\pfinline}[2]{\partial #1 / \partial #2}
\newcommand{\spb}{{\rm spb}}
\newcommand{\suchthat}{\colon}
\newcommand{\tp}{\top}	
\newcommand{\tr}{\!^\top}
\newcommand{\type}{\omega}
\newcommand{\uv}{\bm{u}}
\newcommand{\uvt}{\uv^\tp}
\newcommand{\vv}{\bm{v}}
\newcommand{\x}{\bm{x}}
\newfont{\gilfont}{cmsy10 scaled\magstep0}
\newcommand{\Reals}{\mathbb{R}} 
\newcommand{\Complex}{\mathbb{C}} 
\newtheorem{Theorem}{Theorem}
\newtheorem{Definition}{Definition}
\newtheorem{Lemma}{Lemma}
\newtheorem{Theorem:}[Theorem]{Theorem:}
\newtheorem{Conjecture:}[Theorem]{Conjecture:}
\newtheorem{Corollary:}[Theorem]{Corollary:}
\newtheorem{Definition:}[Theorem]{Definition:}
\newtheorem{Lemma:}[Theorem]{Lemma:}
\newtheorem{Paradox:}[Theorem]{Paradox:}
\newtheorem{Principle:}[Theorem]{Principle:}
\newtheorem{Proposition:}[Theorem]{Proposition:}
\newtheorem{Recursion:}[Theorem]{Recursion:}
\newtheorem{Result:}[Theorem]{Result:}
\newtheorem{Theorem-InOrder}[Theorem]{Theorem}
\newtheorem{Conjecture-InOrder}[Theorem]{Conjecture}
\newtheorem{Corollary-InOrder}[Theorem]{Corollary}
\newtheorem{Definition-InOrder}[Theorem]{Definition}
\newtheorem{Lemma-InOrder}[Theorem]{Lemma}
\newtheorem{Paradox-InOrder}[Theorem]{Paradox}
\newtheorem{Principle-InOrder}[Theorem]{Principle}
\newtheorem{Proposition-InOrder}[Theorem]{Proposition}
\newtheorem{Recursion-InOrder}[Theorem]{Recursion}
\newtheorem{Result-InOrder}[Theorem]{Result}
\newtheorem{Counterexample-InOrder} [Theorem]{Counterexample}
\renewcommand{\spb}{s}
\newcommand{\citet}{\cite}
\newcommand{\citep}{\cite}
\newcommand{\qedsym}{{ $\square$}}
\newcommand{\qedhere}{\tag*{\qedsym}}
\newcommand{\dropcap}[1]{#1}
\title {Resolvent Positive Linear Operators \\Exhibit the Reduction Phenomenon}
\author{Lee Altenberg \\ \url{altenber@hawaii.edu} 
}
\newcommand{\abstractext}{The spectral bound, $\spb(\alpha A + \beta V)$, of a combination of a resolvent positive linear operator $A$ and an operator of multiplication $V$, was shown by Kato to be convex in $\beta \in \Reals$.  \revision{This is shown here to imply, through an elementary lemma, that $\spb(\alpha A + \beta V)$ is also convex in $\alpha > 0$, and notably, $\partial \, \spb(\alpha A + \beta V) / \partial \alpha \leq \spb(A )$ when it exists.  Diffusions typically have $\spb(A) \leq 0$, so that for diffusions with spatially heterogeneous growth or decay rates, \emph{greater mixing reduces growth}.  Models of the evolution of dispersal in particular have found this result when $A$ is a Laplacian or second-order elliptic operator, or a nonlocal diffusion operator, implying selection for reduced dispersal.  These cases are shown here to be part of a single, broadly general, `reduction' phenomenon.}}
\begin{document}
\maketitle
\abstract{
\abstractext\footnote{Dedicated to Sir John F. C. Kingman on the fiftieth anniversary his theorem on the `superconvexity' of the spectral radius \citep{Kingman:1961:Convexity}, which is at the root of the results presented here.}
\  \\ 
\  \\ 
{\bf Keywords}:   {spectral bound | reduction principle | evolution of dispersal | nonlocal dispersal | nonlocal diffusion}
\  \\ 
}

\dropcap{T}he main result to be shown here is that the growth bound, $\type(m A + V)$, of a positive semigroup generated by $m A + V$ changes with positive scalar $m$ at a rate less than or equal to $\type(A)$, where $A$ is also a generator, and $V$ is an operator of multiplication.  Movement of a reactant in a heterogeneous environment is often of this form, where $V$ represents the local growth or decay rate, and $m$ represents the rate of mixing.  Lossless mixing means $\type(A) = 0$, while lossy mixing means $\type(A) < 0$, so this result implies that greater mixing reduces the reactant's asymptotic growth rate, or increases its asymptotic decay rate.  This is a familiar result when $A$ is a diffusion operator, so what is new here is the generality \revision{shown for this phenomenon}.  At the root of this result is a theorem by Kingman on the `superconvexity' of the spectral radius of nonnegative matrices \citep{Kingman:1961:Convexity}.  The logical route progresses from Kingman through Cohen \citet{Cohen:1981:Convexity} to Kato \citet{Kato:1982:Superconvexity}.  The historical route begins in population genetics.

{I}n early theoretical work to understand the evolution of genetic systems, Feldman and colleagues kept finding a common result from each model they examined \revision{\cite{Feldman:1972,Balkau:and:Feldman:1973,Karlin:and:McGregor:1974,Feldman:and:Krakauer:1976,Teague:1976,Teague:1977,Feldman:Christiansen:and:Brooks:1980,Feldman:and:Liberman:1986}} --- be they models for the evolution of recombination, or of mutation, or of dispersal.  Evolution favored reduced levels of these processes in populations near equilibrium under constant environments, and this result was called the \emph{Reduction Principle} \citep{Feldman:Christiansen:and:Brooks:1980}.

These results were found for finite-dimensional models.  But the same reduction result has also been found in models for the evolution of unconditional dispersal in continuous space, in which matrices are replaced by linear operators.  This raises the questions of whether this common result, discovered in such a diversity of models, reflects a single mathematical  phenomenon.  Here, the question is answered affirmatively.  

The mathematical underpinnings of the reduction principle for finite-dimensional models were discovered by Sam Karlin \cite{Karlin:1976,Karlin:1982} (although he did not realize it, and he had earlier proposed an alternate to the reduction principle --- the \emph{mean fitness principle} \cite{Karlin:and:McGregor:1972:Modifier}, which was found to have counterexamples  \cite{Karlin:and:Carmelli:1975}).  Karlin wanted to understand the effect of population subdivision on the maintenance of genetic variation.  Genetic variation is preserved if an allele has \revision{a} positive growth rate when it is rare, protecting it from extinction.  The dynamics of a rare allele are approximately linear, and of the form
\an{\label{eq:Karlin}
\x(t{+}1) = [ (1{-}m) \I + m \P] \D \, \x(t)
}
where $\x(t)$ is a vector of the rare allele's frequency among different population subdivisions, $m$ is the rate of dispersal between subdivisions,  $\Pm$ is the stochastic matrix representing the pattern of dispersal, and $\D$ is a diagonal matrix of the growth rates of the allele in each subdivision.  The allele is protected from extinction if its asymptotic growth rate when rare is greater than 1.  This asymptotic growth rate is the spectral radius, 
\an{
\rho(\A) \eqdef  \max \{| \lambda | \suchthat \lambda \in \sigma(\A)\},
}
where $\sigma(\A)$ is the set of eigenvalues of matrix $\A$.

Karlin discovered that for $\M(m) \eqdef [ (1{-}m) \I + m \P]$, the spectral radius, $\rho(\M(m) \D)$, is a decreasing function of the dispersal rate $m$, for arbitrary strongly-connected dispersal pattern:
\ 

\begin{Theorem}[Karlin Theorem 5.2, { \citep[pp. 194--196]{Karlin:1982}}]\label{Theorem:Karlin5.2}
Let $\P$ be an arbitrary non-negative irreducible stochastic matrix.  Consider the family of matrices
 \[
 \M{(\alpha)} = (1-\alpha) \I + \alpha \P.
 \]
Then for any diagonal matrix $\D$ with positive terms on the diagonal, the spectral radius
\[
\rho(\alpha) = \rho( \M{(\alpha)} \D )
\]
is decreasing as $\alpha$ increases (strictly provided $\D \neq d \I$).
\end{Theorem}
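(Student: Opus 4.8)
The plan is to work on $\alpha \in (0,1]$, where $\M(\alpha) = (1-\alpha)\I + \alpha\P$ is itself an irreducible stochastic matrix, so that $\M(\alpha)\D$ is nonnegative and irreducible; by Perron--Frobenius its spectral radius $\rho(\alpha)$ is then the simple Perron root, equals the spectral bound $\spb(\M(\alpha)\D)$, and is real-analytic in $\alpha$ (the endpoint $\alpha\to 0^+$ is handled by continuity, with $\rho(0)=\max_i d_i$). The first move is to write $\M(\alpha) = \I + \alpha\A$ with $\A \eqdef \P - \I$, a Markov generator satisfying $\A\One = \0$, and to factor out $\alpha$:
\[ \M(\alpha)\D = \alpha\,(\A\D + \alpha^{-1}\D), \qquad \rho(\alpha) = \alpha\,\spb(\A\D + \alpha^{-1}\D), \]
using positive homogeneity of the spectral bound.

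The key structural input comes next. Set $g(\gamma) \eqdef \spb(\A\D + \gamma\D)$ for $\gamma \ge 0$. Here $\A\D$ is essentially nonnegative (its off-diagonal entries $P_{ij}d_j$ are nonnegative) and irreducible, while $\D$ acts as an operator of multiplication, so this is exactly the configuration to which the convexity theorem of Cohen applies --- the finite-dimensional case along the route Kingman $\to$ Cohen $\to$ Kato. Thus $g$ is convex on $\gamma\ge 0$, strictly unless $\D$ is scalar. I would also record that $g(0) = \spb(\A\D) = 0$: since $\A\One=\0$, the strictly positive vector $\D^{-1}\One$ is a right null vector of $\A\D$ (because $\A\D\,\D^{-1}\One = \A\One = \0$), and for an irreducible essentially-nonnegative matrix a positive eigenvector forces its eigenvalue to be the spectral bound.

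Monotonicity then follows from an elementary one-variable convexity fact (the supporting-line argument underlying the paper's general reduction result). Because $\rho(\alpha) = \alpha\,g(\alpha^{-1})$ is the perspective transform of the convex function $g$, differentiation gives, with $\gamma = \alpha^{-1}$,
\[ \rho'(\alpha) = g(\gamma) - \gamma\,g'(\gamma), \]
which is precisely the value at $0$ of the supporting line of $g$ at $\gamma$ (read $g'$ as a subgradient where $g$ is not differentiable); by convexity this intercept is $\le g(0) = 0$. Hence $\rho'(\alpha)\le 0$, so $\rho$ is decreasing, strictly unless $g$ is affine on $[0,\gamma]$, i.e. unless $\D = d\I$, matching the claimed strictness. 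The main obstacle is thus isolated entirely in the convexity of $g$ together with the vanishing $g(0)=0$; everything else is homogeneity and one-variable calculus.

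As a self-contained alternative that isolates the same difficulty, first-order eigenvalue perturbation --- using the positive left/right Perron vectors $\vv,\uv$ of $\M(\alpha)\D$ and the stochasticity of $\M(\alpha)$ --- collapses to
\[ \rho'(\alpha) = \frac{1}{\alpha}\left(\rho - \frac{\vv^\tp\D\uv}{\vv^\tp\uv}\right), \]
so that the whole theorem reduces to the inequality $\rho \le \vv^\tp\D\uv/\vv^\tp\uv$, a bound on the Perron root by the weighted mean of the $d_i$. This is exactly where Kingman's superconvexity enters: the map $(\ell_1,\dots,\ell_n)\mapsto \log\rho$, with $\ell_i = \log d_i$, is convex with gradient the Perron frequencies $\pi_i = v_i u_i/(\vv^\tp\uv)$; comparing, via the gradient inequality, with the point where all $\ell_i$ are equal (where stochasticity forces $\log\rho$ to equal that common value) yields $\log\rho \le \sum_i \pi_i \log d_i$, whence $\rho \le \sum_i \pi_i d_i = \vv^\tp\D\uv/\vv^\tp\uv$ by concavity of the logarithm. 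Strictness again fails only when $\D=d\I$.
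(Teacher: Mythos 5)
Your main argument is, at its core, the paper's own proof in different clothing: you use the identical decomposition $\M(\alpha)\D = \alpha(\P-\I)\D + \D$, the same key input (Cohen's convexity theorem applied with $(\P-\I)\D$ as the essentially nonnegative part and $\D$ as the multiplication operator), and the same observation that $\spb((\P-\I)\D)=0$ --- you exhibit the positive right null vector $\D^{-1}\One$ where the paper uses the left eigenvector $\evt$. Where you differ is in how you extract monotonicity from convexity: you write $\rho(\alpha)=\alpha\,g(1/\alpha)$ and observe that the perspective transform of a convex $g$ has derivative equal to the $\gamma=0$ intercept of the supporting line, hence $\rho'(\alpha)\le g(0)=0$. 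That is a clean, differentiable repackaging of the paper's Dual Convexity Lemma, which reaches the same conclusion by pure convex-combination manipulations and therefore needs no differentiability or subgradient caveats; your version buys brevity at the cost of having to invoke analyticity of the simple Perron root (which you do correctly establish via irreducibility). Your second, self-contained argument --- first-order perturbation reducing everything to $\rho \le \vv^\tp\D\uv/\vv^\tp\uv$, proved from Kingman's log-convexity of $\rho$ in $(\log d_i)$ via the gradient inequality and Jensen --- is genuinely different from the paper's route and is closer in spirit to Karlin's original variational proof and to the Kirkland--Li--Schreiber inequality quoted in the paper's remark; the computation $\rho'(\alpha)=\frac{1}{\alpha}\bigl(\rho - \vv^\tp\D\uv/\vv^\tp\uv\bigr)$ checks out.

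One soft spot: the strict part of the theorem. You assert that Cohen's theorem gives convexity of $g$ \emph{strictly unless $\D$ is scalar}, but Cohen's theorem yields only convexity; the characterization of strict convexity (irreducible $\P$ and $\D\ne c\I$) is Friedland's equality condition, which the paper cites explicitly and which neither of your two arguments actually proves. As written, both of your routes establish the non-strict monotonicity rigorously but leave the strictness claim as an unproved assertion; citing Friedland (or Nussbaum) closes that gap.
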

\ 

Theorem 5.2 means that greater mixing between subdivisions produces lower $\rho(\M(m) \D)$, and if it crosses below $1$, the allele will go extinct.  While this theorem was motivated by the issue of genetic diversity in a subdivided population, the generality of its form applies to any situation where differential growth is combined with mixing.  $\D$ could just as well represent the investment returns on different assets and $\P$ a pattern of portfolio rebalancing.  Or $\D$ could represent the decay rates of reactant in different parts of a reactor, and $\P$ a pattern of stirring within the reactor.  In a very general interpretation, Theorem 5.2 means that \revision{\emph{greater mixing reduces growth and hastens decay}}.  

If the dispersal rate $m$ is not an extrinsic parameter, but is a variable which is itself controlled by a gene, then a gene which decreases $m$ will have a growth advantage over its competitor alleles.  The action of such modifier genes produces a process that will reduce the rates of dispersal in a population.  Therefore, Theorem 5.2 also means that \emph{differential growth selects for reduced mixing}.

In the evolutionary context, the generality of the mixing pattern $\P$ in Karlin's Theorem 5.2 makes it applicable to other kinds of `mixing' besides dispersal.  The pattern matrix $\P$ can just as well refer to the pattern of mutations between genotypes, and then $m$ refers to the mutation rate.  Or $\P$ can represent the pattern of transmission when two loci recombine, and then $m$ represents the recombination rate.  The early models for the evolution of recombination and mutation that exhibited the reduction principle in fact had the same form as \eqref{eq:Karlin} for the dynamics of a rare modifier allele.  Once this was recognized \citet{Altenberg:1984:book,Altenberg:and:Feldman:1987,Altenberg:2009:Linear}, it was clear that Karlin's theorem explained the repeated appearance of the reduction result in the different contexts, and generalized the result to a whole class of genetic transmission patterns beyond the special cases that had been analyzed.

The dynamics of movement in space have been long modeled by infinite-dimensional models, where space is continuous and the concentrations of a quantity at each point are represented as a function.  The dynamics of change in the concentration are modeled as diffusions, where the Laplacian or elliptic differential operator or nonlocal integral operator takes the place of the matrix $\P$ in the finite-dimensional case.  When the substance grows or decays at rates that are a function of its location, the system is often referred to as a reaction-diffusion.  In reaction-diffusion models for the evolution of dispersal, the reduction principle again makes its appearance \cite{Hastings:1983}\cite[Lemma 5.2]{Hutson:Lopez-Gomez:Mischaikow:and:Vickers:1995:Limit} \cite[Lemma 2.1]{Dockery:Hutson:Mischaikow:and:Pernarowski:1998:Slow}\revision{\cite{Cantrell:Cosner:and:Lou:2010:Evolution}}.  In nonlocal diffusion models, again the reduction principle appears \revision{\cite{Hutson:Martinez:Mischaikow:and:Vickers:2003:Evolution}}.  This points to the possibility of an underlying mathematical unity.

Here,  a broad characterization of this `reduction phenomenon' is established by generalizing Karlin's theorem to linear operators.  The reduction results previously found for various linear operators are, therefore, shown to be special cases of a general phenomenon.  

This result is actually implicit in Kato's generalization \citet{Kato:1982:Superconvexity} of Cohen's theorem \citep{Cohen:1981:Convexity} on the convexity of the spectral bound of essentially nonnegative matrices with respect to the diagonal elements of the matrix.  It is \revision{educed} from Kato's theorem here by means of an elementary `dual convexity' lemma. 

Kato's goal in \citet{Kato:1982:Superconvexity} was to generalize, from matrices to linear operators, Cohen's  convexity result \citet{Cohen:1981:Convexity}\revision{:}
\ \\
\begin{Theorem}[Cohen {\citet{Cohen:1981:Convexity}}]\label{Theorem:Cohen}
Let $\D$ be diagonal real $n \times n$ matrix.  Let $\A$ be an essentially nonnegative $n \times n$ matrix. 

Then $\spb(\A {+} \D)$ is a convex function of $\D$.
\end{Theorem}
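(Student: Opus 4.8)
The plan is to deduce the theorem from Kingman's log-convexity (``superconvexity'') theorem for the spectral radius of nonnegative matrices, the external input at the root of the paper. The obstacle to applying Kingman directly to $\A + \D$ is that, writing $d_1,\dots,d_n$ for the diagonal entries of $\D$, the entries of $\A + \D$ (or of a nonnegative shift $\A + \D + c\I$) are \emph{affine} in the $d_i$, and a positive affine function is log-\emph{concave}, not log-convex, so Kingman does not apply. The device that repairs this is the exponential map: the entries $e^{d_i}$ of $e^{\D}$ are log-affine, hence log-convex, in $\D$. The price is that $\A$ and $\D$ do not commute, which I will isolate with the Trotter product formula.

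First I would reduce the spectral bound to a spectral radius. Since $\A + \D$ is essentially nonnegative, $e^{\A+\D}$ is entrywise nonnegative, and by the spectral mapping theorem its eigenvalues are $\{e^{\lambda} : \lambda \in \sigma(\A+\D)\}$; as $|e^{\lambda}| = e^{\Realpart{\lambda}}$, this yields
\[ \spb(\A{+}\D) = \log \rho\!\left(e^{\A+\D}\right). \]
It therefore suffices to prove that $\log \rho(e^{\A+\D})$ is convex in $\D$.

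Next I would split the exponential. For each $k$ set $\C_k = e^{\A/k}\, e^{\D/k}$; Trotter's product formula gives $\C_k^{\,k} \to e^{\A+\D}$ as $k\to\infty$. The matrix $e^{\A/k}$ is a fixed nonnegative matrix (as $\A/k$ is essentially nonnegative), while $e^{\D/k}$ is diagonal with entries $e^{d_j/k}$, so $(\C_k)_{ij} = (e^{\A/k})_{ij}\, e^{d_j/k}$ and hence $\log (\C_k)_{ij} = \log (e^{\A/k})_{ij} + d_j/k$ is affine, thus convex, in $\D$ (with the convention that an identically zero entry counts as log-convex). Along any segment $\D(\theta) = (1{-}\theta)\D_0 + \theta \D_1$ every entry of $\C_k$ is therefore log-convex in $\theta$, so Kingman's theorem gives that $\rho(\C_k)$ is log-convex in $\theta$; equivalently, using $\rho(\C_k^{\,k}) = \rho(\C_k)^k$, the function $\log \rho(\C_k^{\,k}) = k \log \rho(\C_k)$ is convex in $\theta$ for every $k$.

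Finally I would pass to the limit. Since eigenvalues, and hence the spectral radius, depend continuously on the matrix and $\C_k^{\,k} \to e^{\A+\D(\theta)}$, we obtain $\log \rho(\C_k^{\,k}) \to \log \rho(e^{\A+\D(\theta)}) = \spb(\A + \D(\theta))$ pointwise in $\theta$. A pointwise limit of convex functions is convex, so $\theta \mapsto \spb(\A+\D(\theta))$ is convex; as the segment was arbitrary, $\spb(\A+\D)$ is convex in $\D$. The main obstacle here is conceptual rather than computational: one must recognize that the exponential converts the log-concave affine dependence on $\D$ into log-convex dependence so that Kingman becomes applicable, and that the resulting non-commutativity of $\A$ and $\D$ can be quarantined by the Trotter factorization into a fixed nonnegative factor and a log-affine diagonal factor; the remaining steps (multiplicativity of $\rho$ over powers, continuity of $\rho$, and preservation of convexity under pointwise limits) are routine.
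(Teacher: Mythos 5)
Your proof is correct, and it follows exactly the route the paper indicates for this result: the paper does not reprove Cohen's theorem but cites it and notes that ``Cohen's proof relied upon the following theorem of Kingman,'' and your argument --- reducing $\spb(\A{+}\D)$ to $\log\rho(e^{\A+\D})$, factoring via the Lie--Trotter formula into a fixed nonnegative matrix times a log-affine diagonal one, applying Kingman's superconvexity theorem to each $\C_k$, and passing to the pointwise limit of convex functions --- is precisely Cohen's original argument. No gaps; the only points needing care (continuity of the spectral radius, $\rho(e^{M})>0$ so the logarithm is finite, and the identically-zero-entry convention in Kingman's theorem) are all handled.
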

\ \\
Here, \revision{$\spb(\A {+} \D)$ is the spectral bound --- the largest real part of any eigenvalue of $\A{+}\D$.    A synonym for the spectral bound used in the matrix literature is the \emph{spectral abscissa} \citep{Lozinskiy:1969:Estimate,Deutsch:and:Neumann:1984,Deutsch:and:Neumann:1985}.  When the spectral bound is an eigenvalue, it is also referred to as the \emph{principal eigenvalue} \revision{\cite{Keilson:1964:Review}}, \emph{dominant eigenvalue} \citep{Horn:and:Johnson:1985}, \emph{dominant root} \citep{Gantmacher:1959:Applications}, \emph {Perron-Frobenius eigenvalue} \citep{Seneta:1981}, or \emph{Perron root} \citep{Bellman:1955:Iterative}.}  `Essentially nonnegative' means that the off-diagonal elements are nonnegative.  Synonyms include `{quasi-positive}' \citep{Hadeler:and:Thieme:2008:Monotone}, `Metzler', `Metzler-Leontief',  `ML' \citep{Seneta:1981}, and `cooperative' \citep{Birindelli:Mitidieri:and:Sweers:1999}:

Cohen's proof relied upon the following theorem of Kingman:  
\ \\
\begin{Theorem}[Kingman {\citet{Kingman:1961:Convexity}}]\label{Theorem:Kingman}
Let $\A$ be an $n \times n$ matrix whose elements, $A_{ij}(\theta)$, are non-negative functions of the real variable $\theta$, such that they are `superconvex', i.e. for each $i, j$, either $\log A_{ij}(\theta)$ is convex \revision{in $\theta$}, or $A_{ij}(\theta)=0$ for all $\theta$.

Then the spectral radius of $\A$ is also superconvex in $\theta$.
\end{Theorem}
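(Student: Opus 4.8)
The plan is to reduce the superconvexity of $\rho(\A(\theta))$ to two elementary closure properties of log-convex functions together with a limit representation of the spectral radius. By definition, superconvexity of $\rho$ means that $\theta \mapsto \log \rho(\A(\theta))$ is convex (wherever $\rho > 0$; where $\rho \equiv 0$ the assertion is vacuous under the usual $-\infty$ convention, and note $\rho(\A(\theta)) > 0$ forces $\A(\theta)^k \neq 0$ for every $k$, so the logarithms below are finite). The first step is to invoke Gelfand's spectral-radius formula $\rho(\A) = \lim_{k\to\infty} \norm{\A^k}^{1/k}$, taking for $\norm{\cdot}$ the entrywise norm $\norm{\B} = \sum_{i,j} |B_{ij}|$, which is submultiplicative. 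Since every $A_{ij}(\theta) \geq 0$, the entries of each power are nonnegative, so $\norm{\A(\theta)^k} = \sum_{i,j} (\A(\theta)^k)_{ij} = \One^\tp \A(\theta)^k \One$.

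Next I would expand this as a sum over walks, $\One^\tp \A^k \One = \sum A_{i_0 i_1}(\theta)\, A_{i_1 i_2}(\theta) \cdots A_{i_{k-1} i_k}(\theta)$, the sum running over all index sequences $(i_0, \dots, i_k)$. Each summand is a product of entries, hence log-convex in $\theta$, because $\log$ of a product is a sum of convex functions; any summand containing a factor $A_{ij} \equiv 0$ simply vanishes and is discarded. The crucial step is then the lemma that a sum of log-convex functions is again log-convex. This is where the real content lies, and it is the step I expect to be the main obstacle: it follows from the two-term H\"older inequality $f(x)^{\lambda} f(y)^{1-\lambda} + g(x)^{\lambda} g(y)^{1-\lambda} \leq (f(x)+g(x))^{\lambda}(f(y)+g(y))^{1-\lambda}$, for $\lambda \in [0,1]$, applied after bounding each summand at the interpolated point $\lambda x + (1-\lambda) y$ by its log-convexity estimate; an induction then extends this from two terms to the finite walk-sum. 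Hence $\theta \mapsto \norm{\A(\theta)^k}$ is log-convex for every fixed $k$.

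It follows that $\tfrac{1}{k} \log \norm{\A(\theta)^k}$ is convex in $\theta$ for each $k$, since the logarithm of a log-convex function is convex and scaling by $1/k > 0$ preserves convexity. Taking $k \to \infty$, Gelfand's formula yields $\log \rho(\A(\theta)) = \lim_{k\to\infty} \tfrac{1}{k} \log \norm{\A(\theta)^k}$ as a pointwise limit of convex functions, which is convex; this establishes the superconvexity of $\rho$. The remaining care is purely technical: confirming submultiplicativity of the entrywise norm, verifying that the pointwise limit of finite convex functions is convex, and ensuring the logarithms stay finite, all of which are routine once the H\"older-based sum lemma is in hand.
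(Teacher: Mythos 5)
The paper does not actually prove this theorem---it is quoted as background and attributed to Kingman (1961)---so there is no internal proof to compare against. Your argument is correct and is in substance Kingman's original one: expand $\One^\tp \A(\theta)^k \One$ over walks, use closure of log-convex functions under products and (via H\"older) finite sums, and pass to the limit through Gelfand's formula, with the zero-pattern observation correctly disposing of the degenerate cases. Nothing further is needed.
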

\ 

Kato generalized Cohen's result to linear operators by first generalizing Kingman's theorem.  Before presenting Kato's theorem, some terminology needs to be introduced:  
\desclist{
\item[$X$] represents an ordered Banach space or its complexification.
\item[$X_+$] represents the proper, closed, positive cone of $X$, assumed to be generating and normal (see \cite{Kato:1982:Superconvexity}).
\item[$B(X)$] represents the set of all bounded linear operators $A \suchthat X \mapsto X$. 
\item[$A$] is a \emph{positive operator} if $A X_+ \subset X_+$.
\item[{\it The resolvent of $A$}] is $R(\xi, A ) \eqdef (\xi - A )^{-1}$, the operator inverse of $\xi - A $, $\xi \in \Complex$.  
\revision{\item[{\it The resolvent set} $\varrho(A) \subset \revision{\Complex}$] are those values of $\xi$ for which $\xi - A$ is invertible.}
\revision{\item[{\it The spectrum}] of $A \in B(X)$, $\sigma(A)$, is the complement of the resolvent set, $\varrho(A)$.}
\item[{\rm The} {\it spectral bound}] of closed linear operator $A$, not necessarily bounded, is 
\ab{
\spb(A ) \eqdef 
\case{
\sup \{ \Realpart \lambda\suchthat \lambda \in \sigma(A )\} & \text{if } \sigma(A ) \neq \emptyset \\
- \infty &\text{if }  \sigma(A ) = \emptyset.
}	
}
\item[{\it The type}] (growth bound) of an infinitesimal generator, $A$, of a strongly continuous ($C_0$) semigroup, $\{e^{t A }\suchthat t > 0\}$, is
\ab{
\type(A ) \eqdef 
\lim_{t \goesto \infty} \frac{1}{t}  \log \| e^{t A } \| .
}
Generally, $-\infty \leq \spb(A ) \leq \type(A ) < + \infty$, but conditions for $\spb(A ) = \type(A )$ or $\spb(A ) < \type(A )$ are part of a more involved theory for the asymptotic growth of semigroups (see \cite{Neerven:1996:Asymptotic}). 
}	

\begin{Definition}
$A$ is {\it resolvent positive} if there is $\xi_0$ such that $(\xi_0, \infty)  \subset \varrho(A)$ and $R(\xi, A )$ is positive for all $\xi > \xi_0$ \cite{Arendt:1987:Resolvent}.  
\end{Definition}

The relationship of the resolvent positive property to other familiar operator properties includes the following \revision{list of key} results:
\enumlist{
\item If $A$ generates a $C_0$-semigroup $T_t$, then $T_t$ is positive for all $t \geq 0$ if and only if $A$ is resolvent positive \citet[p. 188]{Arendt:Batty:Hieber:and:Neubrander:2011}.
\item If $A$ is a resolvent positive operator defined densely on $X = C(S)$, the \revision{Banach} space of continuous complex-valued functions on compact space $S$, then $A$ generates a positive $C_0$-semigroup \citet[Theorem 3.11.9] {Arendt:Batty:Hieber:and:Neubrander:2011}.
\item If $A$ is resolvent positive and its domain, $D(A) \subset X$, is dense in $X$, then for every $f \in D(A^2)$, there exists a unique solution, $u(t) \in D(A)$ for all $t \geq 0$, $u \in C^1([0, \infty), X)$, to the Cauchy problem \cite[Theorem 7.1]{Arendt:1987:Resolvent}
\ab{
\pf{u}{t} &= A u(t) \qquad (t \geq 0), \qquad u(0) = f.
}
\item  If $A$ is resolvent positive then: $\spb(A) < + \infty$; if $\sigma(A )$ is nonempty, i.e. $-\infty < \spb(A)$, then $\spb(A) \in \sigma(A)$; if $\xi \in \Reals \cap \varrho(A)$ yields $R(\xi,A) \geq 0$ then $\xi > \spb(A)$   \cite{Kato:1982:Superconvexity} \citet[Proposition 3.11.2]{Arendt:Batty:Hieber:and:Neubrander:2011}.
%
\item Differential operators higher than second order are never resolvent positive \cite[Corollary 2.3]{Arendt:Batty:and:Robinson:1990}\cite{Ulm:1999:Interval}.
\item Particular cases of resolvent positive operators include 
\enumlist{
\item second-order elliptic operators
 \ab{
 A = \sum_{j,k=1}^n a_{jk}(x) \frac{\partial^2}{\partial x_j \partial x_k} + \sum_{j=1}^n b_j(x) \pf{}{x_j} + c(x),
}
where the matrix $\Bmatr{a_{jk}(x)}_{j,k=1}^n$ is symmetric and positive-definite for each $x$, and appropriate regularity conditions hold for the domain and coefficients (e.g. \cite{Donsker:and:Varadhan:1976},\cite{Kato:1982:Superconvexity},\cite{Berestycki:Nirenberg:and:Varadhan:1994:Principal}).  
\item Linear integral operators $A $ on \revision{$X=C(\ov{\Omega})$} defined by
\ab{
(A f)(x) \eqdef \int_\Omega K(x,y) \, f(y)\,  dy + b(x) \, f(x),
}
where
$K \in C(\ov{\Omega} \times \ov{\Omega}, \Reals^+)$, $\Omega \subset \Reals^n$ is bounded, and $K(x, y) > 0$, $b(x)$ are measurable functions for $x,y \in \ov{\Omega}$ \citet{Hutson:Martinez:Mischaikow:and:Vickers:2003:Evolution,Grinfeld:Hines:Hutson:Mischaikow:and:Vickers:2005:Non-local,Bates:and:Zhao:2007:Existence}.  A resolvent positive combination of integral and differential operator is analyzed in \cite{Chabi:and:Latrach:2002:Singular}.
}	
}	

Kato's generalization of Cohen's theorem is as follows.  
\ \\
\begin{Theorem}[Generalized Cohen's theorem \citep{Kato:1982:Superconvexity}]\label{Theorem:Kato}
\ 

Consider $X= C(S)$ (continuous functions on a compact Hausdorff space $S$) or $X=L ^p(S)$, $l \leq p < +\infty$, on a measure space $S$, or more generally, let $X$ be the intersection of two $L ^p$-spaces with different $p$'s and different weight functions.  Let $A \suchthat X \mapsto X$ be a linear operator which is resolvent positive.  Let $V$ be an operator of multiplication on $X$ represented by a real-valued function $v$, where $v \in C(S)$ for $X=C(S)$, or $v \in L ^\infty(S)$ for the other cases. 

Then $\spb(A +V)$ is a convex function of $V$. If in particular $A$ is a generator, then both $\spb (A +V)$ and $\type(A +V)$ are convex in $V$.
\end{Theorem}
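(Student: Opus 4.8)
The plan is to generalize Kingman's Theorem~\ref{Theorem:Kingman} from nonnegative matrices to positive operators, and then to run the operator analogue of Cohen's argument (Theorem~\ref{Theorem:Cohen}). Since convexity of $\spb(A+V)$ in $V$ means convexity along every line segment, I would fix two multiplication operators $V_0,V_1$ (by functions $v_0,v_1$), set $V_\theta \eqdef (1-\theta)V_0 + \theta V_1$ and $v_\theta = (1-\theta)v_0 + \theta v_1$ for $\theta \in [0,1]$, and reduce to convexity of $\theta \mapsto \spb(A+V_\theta)$. Both quantities of interest are read off from the spectral radius of a positive operator: when $A$ is a generator, $\rho(e^{t(A+V)}) = \lim_n \|e^{nt(A+V)}\|^{1/n} = e^{t\,\type(A+V)}$, while for a merely resolvent positive $A$ the resolvent $R(s,A+V) = (s-A-V)^{-1}$ is a positive operator for $s>\spb(A+V)$, with $\spb$ the infimum of such $s$. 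Because $f$ is convex exactly when $e^{cf}$ is log-convex for $c>0$, it suffices in either case to prove that a suitable one-parameter family of positive operators has \emph{log-convex} spectral radius in $\theta$.

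First I would expose the log-convex structure through the Trotter product formula, $e^{t(A+V_\theta)} = \slim_{k \goesto \infty} \left( e^{(t/k)A}\, M_{\theta,k}\right)^{k}$, where $M_{\theta,k}$ is multiplication by the positive function $x \mapsto \exp\!\big((t/k)\, v_\theta(x)\big)$. For each point $x$ the exponent $(t/k)\,v_\theta(x)$ is affine, hence convex, in $\theta$, so the multiplier is pointwise log-convex in $\theta$, while the fixed factor $e^{(t/k)A}$ carries no $\theta$-dependence. The operators $P_{\theta,k} \eqdef ( e^{(t/k)A}\, M_{\theta,k})^{k}$ are therefore positive operators assembled from $\theta$-independent positive pieces and pointwise log-convex multipliers by composition and summation, operations that preserve pointwise log-convexity of the data.

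The crux is the operator generalization of Kingman's theorem: for a positive operator built from such log-convex data, its spectral radius is again log-convex in $\theta$, so $\theta \mapsto \rho(P_{\theta,k})$ is log-convex for every $k$. Granting this, I would pass to the limit $k \goesto \infty$: since $P_{\theta,k} \goesto e^{t(A+V_\theta)}$ and log-convexity survives pointwise limits, $\theta \mapsto \rho(e^{t(A+V_\theta)})$ is log-convex, whence $\type(A+V_\theta) = \tfrac1t \log \rho(e^{t(A+V_\theta)})$ is convex; this yields the second assertion when $A$ is a generator. For the first assertion, where $A$ is only resolvent positive and need not generate a semigroup, I would run the parallel argument on the resolvent, using that by resolvent positivity $R(s,A+V_\theta) \geq 0$ for $s>\spb(A+V_\theta)$ and that $\spb(A+V_\theta) \in \sigma(A+V_\theta)$ (item~4 of the resolvent-positive properties) anchors $\spb$ as a genuine real spectral point; the Laplace representation $R(s,\cdot)=\int_0^\infty e^{-st}(\cdot)\,dt$ then transfers the same log-convexity to the resolvent (or, alternatively, a Yosida approximation reduces this case to the generator case).

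The main obstacle is twofold, and lives entirely in the passage from matrices to operators. First, the generalized Kingman inequality must be proved without explicit matrix entries: the estimate $\rho(P_{(\theta_1+\theta_2)/2,\,k}) \leq \rho(P_{\theta_1,k})^{1/2}\,\rho(P_{\theta_2,k})^{1/2}$ has to be extracted from the order structure by a H\"older / geometric-mean argument, and this is exactly where the hypotheses that $X_+$ be generating and normal, and that $X$ be one of the admissible $C(S)$ or $L^p$ spaces (or their intersections), are needed to control the geometric mean of positive elements. Second, the spectral radius is not continuous under general operator limits, so justifying $\rho(P_{\theta,k}) \goesto \rho(e^{t(A+V_\theta)})$ requires coupling the Trotter convergence to the cone structure; resolvent positivity, by forcing $\spb$ onto the real axis as an honest spectral value, is precisely what makes this limit behave.
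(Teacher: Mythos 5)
The paper gives no proof of this theorem: it is quoted from Kato (1982) as imported background, and the paper's own contribution (Theorem \ref{Theorem:Main} via Lemma \ref{Lemma:Main}) begins only after it. So there is no internal proof to compare yours against; the relevant benchmark is Kato's published argument, whose architecture your sketch does reproduce correctly --- generalize Kingman's superconvexity theorem to positive operators on the admissible spaces, feed the Trotter product $\bigl(e^{(t/k)A}\, e^{(t/k)V_\theta}\bigr)^{k}$ into it to get convexity of the type, and treat the purely resolvent positive case through the resolvent.

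As a proof, however, your text has two genuine gaps, and you have named them yourself: the operator form of Kingman's theorem and the passage to the limit $k \goesto \infty$ are not side conditions to be checked but the entire mathematical content of the theorem. On the first, the H\"older/geometric-mean estimate really must be built by hand on each of the listed spaces (pointwise geometric means of kernels on $C(S)$, interpolation of multipliers on $L^p$ --- this is where the restriction to these particular spaces enters), and nothing in your sketch indicates how. On the second, your plan to pass $\rho(P_{\theta,k}) \goesto \rho(e^{t(A+V_\theta)})$ is the wrong move: the spectral radius is not continuous under strong operator convergence, and resolvent positivity does not repair this. The workable device is to pass the \emph{order inequality} (the superconvexity relation between the operators themselves) to the strong limit, which is stable because the cone is closed and normal, and only then take spectral radii via $\rho(T) = \lim_n \|T^n\|^{1/n}$. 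Likewise, the Laplace representation $R(s, A+V_\theta) = \int_0^\infty e^{-st}\, e^{t(A+V_\theta)}\, dt$ is unavailable precisely when $A$ is resolvent positive but not a generator, which is the case the first assertion is about; that case needs its own superconvexity argument on the resolvent (e.g.\ on the Neumann series for $R(\xi, A+V_\theta)$), not a reduction to the semigroup case.
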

\section*{Results} 
\begin{Theorem} [Generalized Karlin's theorem]\label{Theorem:Main} \ \\
Let $A$ be a resolvent positive linear operator, and $V$ be an operator of multiplication, under the same assumptions as Theorem \ref{Theorem:Kato}.  

Then for $m>0$, 
\enumlist{
\item $\spb(m \, A + V)$ is convex in m;  \label{item:Theorem:Main:1}
\item For each $m>0$, either \label{item:Theorem:Main:2}
\enumlist{
\item $\spb((m+d) A + V) < \spb(m \, A + V) + d \ \spb(A ) \ \ \revision{\forall} \ d>0$, or 
\item $\spb((m+d) A + V) = \spb(m \, A + V) + d \ \spb(A ) \ \ \revision{\forall} \ d > 0$;
} 
\item In particular, when $\spb(A ) = 0$ then $\spb(m \, A + V)$ is non-increasing in $m$ (the `reduction phenomenon'), and when $\spb(A ) < 0$ then $\spb(m \, A + V)$ is strictly decreasing in $m$; \label{item:Theorem:Main:3}
\item Whenever $\df{}{m} \spb(m \, A + V)$ exists, then \label{item:Theorem:Main:4}
\an{
\df{}{m} \spb(m \, A + V) \leq \spb(A ). \label{eq:Main}
}
}	
If $A$ is a generator of a $C_0$-semigroup, then the above relations on $\spb(m \, A +V)$ also apply to the type $\type(m \, A +V)$.
\end{Theorem}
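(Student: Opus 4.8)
The plan is to derive every assertion from Kato's convexity result (Theorem \ref{Theorem:Kato}) together with one additional fact: the spectral bound is positively homogeneous. First I would record that $\sigma(cB) = c\,\sigma(B)$ for $c>0$, so $\spb(cB) = c\,\spb(B)$; the same scaling holds verbatim for the type, since $\type(cB) = \lim_{t\to\infty} t^{-1}\log\|e^{t c B}\| = c\,\type(B)$. Writing $mA + V = m\,(A + m^{-1}V)$ and applying homogeneity gives the key identity
\ab{
\phi(t) \eqdef \spb(A + t\,V), \qquad \psi(m) \eqdef \spb(m\,A + V) = m\,\phi(1/m),
}
valid for $m>0$, where $\phi$ is convex on $\Reals$ (Theorem \ref{Theorem:Kato}, restricted to the line $t\mapsto tV$) and $\phi(0) = \spb(A)$. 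That $mA$ and $A+m^{-1}V$ stay in the admissible class is immediate: $mA$ is resolvent positive because $R(\xi, mA) = m^{-1} R(\xi/m, A)$, and $A+m^{-1}V$ is $A$ plus a multiplication operator. Thus the whole theorem reduces to elementary properties of the one-variable convex $\phi$ under the ``perspective'' map $m \mapsto m\,\phi(1/m)$ --- the dual convexity lemma alluded to in the abstract.

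For part \ref{item:Theorem:Main:1} I would invoke that the perspective transform $(x,m)\mapsto m\,\phi(x/m)$ of a convex function is jointly convex on $\Reals\times(0,\infty)$; restricting to the line $x=1$ shows $\psi$ is convex, so that $\chi(m) \eqdef \psi(m) - m\,\spb(A)$ is convex as well. Part \ref{item:Theorem:Main:4} is then the supporting-line inequality for $\phi$: where $\psi$ is differentiable (equivalently, where $\phi$ is differentiable at $t=1/m$) one computes $\psi'(m) = \phi(t) - t\,\phi'(t)$ with $t=1/m$, and convexity gives $\phi(0) \geq \phi(t) + \phi'(t)(0-t) = \phi(t) - t\,\phi'(t) = \psi'(m)$, i.e. $\psi'(m) \leq \phi(0) = \spb(A)$.

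Parts \ref{item:Theorem:Main:2} and \ref{item:Theorem:Main:3} I would read off the function $\chi$. Rewriting $\chi(m) = (\phi(1/m)-\phi(0))/(1/m)$ exhibits it as the slope of $\phi$ from $0$ to $t=1/m$; since slopes of a convex function are nondecreasing in $t$ and $t=1/m$ decreases in $m$, $\chi$ is nonincreasing. When $\spb(A)=0$ this is exactly monotonicity of $\psi$, and when $\spb(A)<0$ one has $\psi(m+d) = \chi(m+d) + (m+d)\spb(A) \leq \chi(m) + (m+d)\spb(A) = \psi(m) + d\,\spb(A) < \psi(m)$, the strict decrease of part \ref{item:Theorem:Main:3}. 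For the dichotomy of part \ref{item:Theorem:Main:2}, I use that $\chi$ is both convex and nonincreasing: if $\chi(m+d_0)=\chi(m)$ for some $d_0>0$ then $\chi$ is constant on $[m,m+d_0]$, so its nondecreasing right derivative vanishes there and, being $\leq 0$ everywhere by monotonicity, vanishes on all of $[m,\infty)$; hence $\chi$ is constant on $[m,\infty)$ and equality holds for every $d>0$. Otherwise $\chi(m+d)<\chi(m)$ for all $d>0$. Finally, when $A$ generates a $C_0$-semigroup, $mA+V$ does too (bounded perturbation by the multiplication operator $V$), and Theorem \ref{Theorem:Kato} supplies convexity of $t\mapsto\type(A+tV)$; since $\type$ is positively homogeneous and $\type(A+0\cdot V)=\type(A)$, the identical argument with $\phi(t)=\type(A+tV)$ yields all four conclusions for $\type(mA+V)$.

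The routine parts are the homogeneity bookkeeping and the standard convex-analysis facts (monotone slopes, supporting lines, convexity of the perspective). I expect the only genuine care to lie at the non-smooth points: part \ref{item:Theorem:Main:4} must be phrased ``wherever the derivative exists'' because a convex $\psi$ can fail to be differentiable at countably many $m$, and the dichotomy of part \ref{item:Theorem:Main:2} is cleanest when argued through the one-sided (right) derivative rather than a two-sided one, so as to cover those exceptional points uniformly.
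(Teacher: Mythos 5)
Your proposal is correct and follows essentially the same route as the paper: Kato's theorem gives convexity of $\spb(A+tV)$ in $t$, and positive homogeneity of the spectral bound converts this into convexity, monotonicity, the dichotomy, and the slope bound in $m$ --- exactly the content of the paper's Dual Convexity Lemma \ref{Lemma:Main}. The only difference is packaging: where the paper proves that lemma by hand via explicit rescalings and a five-point convexity argument, you recognize $\spb(mA+V) = m\,\spb(A+m^{-1}V)$ as the perspective transform of a convex function and read off the conclusions from standard convex-analysis facts (joint convexity of the perspective, monotone chord slopes, supporting lines), a tidier derivation of the same lemma.
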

%
\ \\
\begin{proof}
We consider the general form
\an{\label{eq:GeneralForm}
\phi(\alpha, \beta) \eqdef \spb(\alpha A + \beta V) \text{ or } \type(\alpha A + \beta V)
}
where $\alpha >0, \beta \in \Reals$.  Kato \cite{Kato:1982:Superconvexity} explicitly shows that $\phi(1, \beta)$ is convex in $\beta$ (which he points out is equivalent to varying $V$).  Lemma \ref{Lemma:Main} (to follow) shows that this implies the properties asserted above regarding the effects of varying $m$ on $\spb(m \, A + V) = \phi(m, 1)$.  \qedsym
\end{proof}
\ \\
\begin{Lemma}[Dual Convexity]\label{Lemma:Main}
Let $f: \Reals \times \Reals \mapsto \Reals$, be jointly continuous.  For $x > 0$ and $y \geq 0$, let $f$ have the following properties:
\an{
f(\alpha x, \alpha y) &= \alpha f(x,y), \text{ for } \alpha > 0,  \label{eq:rescaler}
}
and
\an{
f(x,y) & \text{ is convex in }y. \label{eq:convexity}
}
Then for $x > 0$:
\begin{enumerate}
\item $f(x,y)$ is convex in $x$, for $y > 0$; \label{Lemma:xConvex}
\item For each $x > 0$, either \label{Lemma:Main:2}
\enumlist{
\item $f(x+d,1) < \ f(x,1) + d \, f(1,0)$ $\forall \ d > 0$; or\label{Lemma:x+d<}
\item $f(x+d,1) = \ f(x,1) + d \, f(1,0)$ $\forall \ d > 0$; \label{Lemma:x+d=}
}
\item When it exists, $\pf{}{x}f(x, 1) \leq f(1,0)$.  \label{Lemma:Derivative}
\end{enumerate}	
If, in the above, $f(x,y)$ is strictly convex in $y$, then $f(x,y)$ is strictly convex in $x$, and $f(x+d,1) < \ f(x,1) + d \, f(1,0)$.

The results is unchanged if the inequalities on $y$ are reversed.  
\end{Lemma}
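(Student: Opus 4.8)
The plan is to exploit the degree-one homogeneity \eqref{eq:rescaler} to reduce everything to a single one-variable slice, and to recognize that the convexity hypothesis \eqref{eq:convexity} is exactly the convexity of that slice; the perspective structure of positively homogeneous functions then transfers convexity from the $y$-direction to the $x$-direction. Concretely, setting $\alpha = x$ and base point $(1, y/x)$ in \eqref{eq:rescaler} gives $f(x,y) = x\, g(y/x)$, where $g(t) \eqdef f(1,t)$; taking $y=1$ yields the key identity $g_1(x) \eqdef f(x,1) = x\, g(1/x)$. The hypothesis \eqref{eq:convexity} at $x=1$ says precisely that $g$ is convex, and I will derive all three conclusions from this single fact together with homogeneity and joint continuity.

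For the convexity in $x$, I would first reduce to the slice $y=1$: since $f(x,y) = y\, f(x/y,1)$ for $y>0$, convexity of $x \mapsto f(x,y)$ follows from convexity of $g_1$ by the affine substitution $x \mapsto x/y$ and the positive factor $y$. It then remains to show $g_1(x) = x\, g(1/x)$ is convex for $x>0$, which is the elementary statement that the perspective of a convex function is convex. For $x_1,x_2>0$, $\lambda\in[0,1]$, and $x = \lambda x_1 + (1-\lambda)x_2$, the weights $\mu \eqdef \lambda x_1/x$ and $1-\mu = (1-\lambda)x_2/x$ satisfy $\mu(1/x_1) + (1-\mu)(1/x_2) = 1/x$, so convexity of $g$ at $1/x$ followed by multiplication by $x>0$ gives $x\,g(1/x) \leq \lambda x_1\, g(1/x_1) + (1-\lambda)x_2\, g(1/x_2)$, i.e.\ $g_1(\lambda x_1 + (1-\lambda)x_2) \leq \lambda g_1(x_1) + (1-\lambda)g_1(x_2)$. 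If $g$ is strictly convex (strict convexity in $y$) and $x_1 \neq x_2$, $\lambda\in(0,1)$, the inequality is strict, yielding strict convexity in $x$.

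Next I would identify $f(1,0)$ as the asymptotic slope of $g_1$. By \eqref{eq:rescaler} and joint continuity, $g_1(x)/x = f(1,1/x) \to f(1,0)$ as $x\to\infty$, so $\sigma \eqdef f(1,0) = \lim_{x\to\infty} g_1(x)/x$. Because $g_1$ is convex, its chord slopes $[g_1(x+d)-g_1(x)]/d$ are nondecreasing in $d$ and converge to $\sigma$; being nondecreasing, each such slope is at most $\sigma$, giving the uniform bound $g_1(x+d) - g_1(x) \leq d\,f(1,0)$ for every $d>0$. Dividing by $d$ and letting $d\to 0^+$ yields the derivative bound $\partial_x f(x,1) \leq f(1,0)$ whenever the derivative exists, which settles that conclusion.

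The dichotomy is the one step needing care. Set $\eta(d) \eqdef g_1(x) + d\,f(1,0) - g_1(x+d)$; then $\eta$ is concave in $d$ (an affine term minus the convex map $d\mapsto g_1(x+d)$), $\eta(0)=0$, and $\eta \geq 0$ on $[0,\infty)$ by the uniform bound just established. A nonnegative concave function that vanishes at $0$ and at some $d_0>0$ must vanish identically: for any other $d$, expressing $d_0$ as a convex combination of $d$ and a point on its far side forces $\eta(d)\leq 0$, hence $\eta(d)=0$. Thus either $\eta(d)>0$ for all $d>0$ (case (a)) or $\eta\equiv 0$ (case (b)), which is the stated alternative; strict convexity in $y$ rules out case (b) via the strict convexity already obtained. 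The reversed-inequality remark follows from the symmetry of \eqref{eq:rescaler} in $(x,y)$ together with convexity on $\{y\leq 0\}$, so it needs no separate argument. I expect the main obstacle to be presenting the direction-swap cleanly — making explicit that homogeneity turns ``convex in $y$'' into the perspective form $x\,g(1/x)$, which is manifestly convex in $x$ — with the concavity dichotomy being the only other point that demands attention.
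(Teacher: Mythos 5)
Your proof is correct, and its core --- using the degree-one homogeneity to transfer convexity in $y$ to convexity in $x$ --- is the same mechanism as the paper's; your identification of $f(x,1)=x\,g(1/x)$ as the perspective of the convex slice $g(t)\eqdef f(1,t)$ is a cleaner packaging of the paper's explicit rescaling-and-ansatz computation (your weights $\mu=\lambda x_1/x$ are exactly the paper's $h$). You genuinely diverge in two places. First, for the bound $f(x+d,1)\leq f(x,1)+d\,f(1,0)$ the paper reads it off directly as a single instance of convexity in $y$ via the convex combination $\frac{1}{x+d}=\frac{x}{x+d}\cdot\frac1x+\frac{d}{x+d}\cdot 0$ (legitimate because the hypothesis covers $y=0$), whereas you characterize $f(1,0)$ as the asymptotic slope $\lim_{x\to\infty}g_1(x)/x$ and invoke monotonicity of chord slopes; this is valid but leans on continuity of $f(1,\cdot)$ at $0$, which the paper's route does not need. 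Second, for the dichotomy, your observation that $\eta(d)=f(x,1)+d\,f(1,0)-f(x+d,1)$ is a nonnegative concave function vanishing at $0$, so that a single interior zero forces $\eta\equiv 0$, is tidier than the paper's five-point chain of convexity inequalities and reaches the same conclusion with less bookkeeping. The derivative bound and the strict-convexity addendum (an affine restriction to a ray contradicts strict convexity) match the paper in substance.
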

%
\begin{proof}
{\flushleft \ref{Lemma:xConvex}.  \emph{$f(x,y)$ is convex in $x$, for $y > 0$}.}

The relation $f(\alpha x, \alpha y) = \alpha f(x,y)$ allows a set of rescalings that transform convexity in $y$ into convexity in $x$.  It is perhaps worth noting that this relation is actually a homomorphism, which can be put into a more familiar form by defining product $x \star y \eqdef f(x,y)$, and function $\psi(x) \eqdef \alpha  x$, which gives $\psi(x) \star \psi(y) = \psi (x \star y)$.  

For the following derivations, the constraints are $y \neq 0$, 
$y_1, y_2 \neq 0$ have the same sign as $y$, and
$0 < m < 1$.

These restrictions are made so that $\{y, y_1, y_2, m, 1{-}m$, $(1{-}m)y_1 + m y_2 \}$ are nonzero and all have the same sign, so that division with them is defined, and \revision{their ratios do not change sign} when the sign of $y$ is reversed.

Convexity of $f$ in $y$ gives
\an{\label{eq:ConvexCombination}
(1{-}m) f(x,y_1) + m &f(x,y_2) \geq f(x, (1{-}m)y_1 + m y_2),
}
for $m \in (0, 1)$, $y_1 \neq y_2$.  Using \eqref{eq:rescaler} with substitutions $\alpha = y_1/y$, $\alpha = y_2/y$, and $\alpha = [(1{-}m)y_1 + m y_2]/y$ in the terms in \eqref {eq:ConvexCombination}, where $y \in (0, \infty)$, yields:
\an{\label{eq:rescaler1}
(1&-m) \frac{y_1}{y} f\bigl(\frac{x y}{y_1},y \bigr) + m  \frac{y_2}{y} f \bigl(\frac{x y}{y_2}, y \bigr) \notag\\
& \geq \frac{(1{-}m)y_1 + m y_2}{y} f\bigl(\frac{x y}{(1{-}m)y_1 + m y_2}, y\bigr).
}
Let $x_1 \eqdef x y / y_1$ and $x_2 \eqdef x y / y_2$ represent the rescaled arguments for $f$ on the left side of \eqref {eq:rescaler1}.  We see that $x_1, x_2 \in (0, \infty)$ since $x \in (0, \infty)$ and $y,y_1,y_2 \neq 0$ have the same sign.

We try the ansatz that $x_1$ and $x_2$ can be combined convexly to yield the third rescaled argument on the right side of \eqref {eq:rescaler1}: 
\ab{
\frac{x y}{(1{-} m) y_1 {+} m y_2} = (1{-} h) x_1 {+} h x_2 {=} (1{-} h) \frac{x y}{y_1} {+} h \frac{x y}{y_2}. 
}
The ansatz has solution
\ab{
h &= \frac{m y_2}{(1{-}m) y_1 + m y_2},
\mbox{\ and }
1-h = \frac{(1{-}m) y_1}{(1{-}m) y_1 + m y_2}.
}
Note that $h \in (0,1)$ is assured because $y_1$ and $y_2$ have the same sign, $y_1 \neq y_2$, and $m \in (0,1)$.

Define $\phi \eqdef [{(1{-}m)y_1 + m y_2}]/{y}$.  Then $\phi > 0$ since  $y, y_1, y_2$ all have the same sign.  Substitution gives 
$(1{-}m){y_1}/{y} = (1{-}h) \phi$, and $m y_2/y = h \phi$, and \eqref{eq:rescaler1} becomes:
\ab{
(1{-}h) \phi  f(x_1,y) + h \phi  f(x_2, y)  \geq \phi f( (1{-}h) x_1 + h x_2, y).
}
After dividing both sides by $\phi > 0$, 
\an{
(1{-}h)  f(x_1,y) {+} h  f(x_2, y) \geq f( (1{-}h) x_1 + h x_2, y), \label{eq:Convexity}
}
which is convexity in $x$.  The case of strict convexity follows by substituting $>$ for $\geq$ throughout.
 \\
 
{\flushleft \ref{Lemma:Main:2}. \emph{Either
$f(x+d,1) < \ f(x,1) + d \, f(1,0)$ $\forall \ d > 0$, or 
$f(x+d,1) = \ f(x,1) + d \, f(1,0)$ $\forall \ d > 0$.} }

The strategy will be to show first that $f(x+d,1) \leq \ f(x,1) + d \, f(1,0)$.  Next, it is shown that if $f(x+d,1) < \ f(x,1) + d \, f(1,0)$ for any $d>0$, then it is true for all $d>0$, because convexity prevents $f(x+d,1)$ from ever returning to the line $f(x,1) + d \, f(1,0)$ for $d  > 0$.

By \eqref{eq:rescaler}, for $x, d > 0$, we have the equivalences
\an{
&f(x+d,1) \leq  \ f(x,1) + d \, f(1,0) \iff \label{eq:Monotone} \\
&(x+d) \, f \bigl(1, \frac{1}{x+d}\bigr) \leq  \ x \, f \bigl(1, \frac{1}{x} \bigr) + d \, f(1,0) \iff \notag \\
&f \bigl(1, \frac{1}{x+d} \bigr) \leq  \frac{x}{x+d} \, f \bigl(1, \frac{1}{x} \bigr) +   \frac{d}{x+d}\, f(1,0). \label{eq:renormed}
}
Since the $y$ arguments for $f(x,y)$ in \eqref {eq:renormed} are related by convex combination,
\ab{
\frac{1}{x+d} =  \frac{x}{x+d} \, \frac{1}{x} + \bigl(1- \frac{x}{x+d} \bigr) * 0  ,
}
then \eqref {eq:renormed} is just a statement of the convexity of $f(x,y)$ in $y$, as hypothesized.
The case of strict convexity follows by substituting $<$ for $\leq $, throughout.

Now, given $x >0$, suppose that for some $d_1 > 0$, 
\an{
f(x+d_1,1) <  \ f(x,1) + d_1 \, f(1,0). \label{eq:fx+d<}
}
We shall see that convexity then prevents $f(x+d,1)$ from ever returning to the line $f(x,1) + d \, f(1,0)$ for $d > 0$.  

We consider five points: $x < x+d_0 < x+d_1 < x+d_2 < x+d_3 < \infty$.
For readability, write $g(x) \equiv f(x,1)$ and $F \equiv f(1,0)$.  By convexity \eqref{eq:Convexity}, and hypothesis \eqref {eq:fx+d<},
\ab{
g(x&+d_0) 
\leq \bigl(1-\frac{d_0}{d_1} \bigr)   g(x)  + \frac{d_0}{d_1} g(x+d_1) \\
& < \bigl(1-\frac{d_0}{d_1} \bigr)   g(x)  + \frac{d_0}{d_1} (g(x) + d_1 F) 
= \ g(x) + d_0 \, F. 
}
and, by \eqref{eq:Convexity}, \eqref {eq:fx+d<}, and \eqref{eq:Monotone} (line $3$ below),
\ab{
g(x+ d_2)
&\leq  \frac{d_3 - d_2}{d_3 - d_1}   g(x +  d_1)  +  \frac{d_2 - d_1}{d_3 - d_1} g(x+ d_3) \\
& <  \frac{d_3 - d_2}{d_3 - d_1} (g(x) + d_1  F)  +  \frac{d_2 - d_1}{d_3 - d_1}g(x+ d_3) \\
& \leq  \frac{d_3 - d_2}{d_3 - d_1} (g(x) + d_1  F) 
+  \frac{d_2 - d_1}{d_3 - d_1} (g(x) + d_3 F ) \\
\iff & \ 
g(x+ d_2) (d_3 - d_1)& \\
& < (d_3 - d_2) \ ( g(x) + d_1  F)  + ( d_2 - d_1 ) (g(x) + d_3 F)\\
& = (d_3 - d_1) g(x) + d_2 (d_3 - d_1)  F \\
 \iff & \ 
 g(x+ d_2) 
< g(x) + d_2 \, F.
}

{\flushleft \ref{Lemma:Derivative}. \emph{When it exists, $\pf{}{x}f(x, 1) \leq  f(1,0)$.}}

Rearrangement of \eqref{eq:Monotone} gives
\ab{
\frac{f(x+d,1) {-}  f(x,1)}{d} \leq  f(1,0).
}
Hence when the limit exists,
\ab{
\lim_{d \goesto 0} \frac{f(x+d,1) - f(x,1)}{d} = \pf{f(x,1)}{x} \leq  f(1,0). \qedhere
}
\end{proof}

\begin{remark}\rm 
It would be clearly desirable to characterize the conditions for strict convexity in Kato's theorem, so that by Lemma \ref{Lemma:Main}, one would obtain strict convexity in Theorem \ref{Theorem:Main}, item \ref{item:Theorem:Main:1}, and strict monotonicity in items \ref{item:Theorem:Main:3} and \ref{item:Theorem:Main:4}.  Indeed, item \ref{Lemma:Main:2} is the best that can be offered in the way of strict inequality without strict convexity.  But the problem is more technical and is deferred to elsewhere.  

It is reasonable, nevertheless, to conjecture that the properties which produce strict convexity in the matrix case \citet[Theorem 4.1]{Friedland:1981} \cite[Theorem 1.1]{Nussbaum:1986:Convexity} extend to their Banach space versions:  i.e. for $a > 0$, when resolvent positive operator $A$ is irreducible \cite[p. 250]{Greiner:Voigt:and:Wolff:1981:Spectral} \citet[p. 41]{Arendt:and:Batty:1995}, then $\spb(\alpha A + \beta V)$ is strictly convex in $\beta$ if and only if $V$ is not a constant scalar.
\end{remark}

\subsection*{A Third Proof of Karlin's Theorem 5.2}
\ 

Karlin's proof was based on the Donsker-Varadhan variational formula for the spectral radius \citep{Donsker:and:Varadhan:1975}.  Kirkland et al. \cite{Kirkland:Li:and:Schreiber:2006} recently discovered another proof using entirely structural methods.  A third distinct proof of Karlin's theorem is seen here by application of Lemma \ref{Lemma:Main} to Cohen's theorem, combined with Friedland's equality condition \cite[Theorem 4.1]{Friedland:1981} (see also \cite{Nussbaum:1986:Convexity} for a different proof), as follows.

The expression in Karlin's Theorem 5.2 can be put in the form used in Theorem \ref{Theorem:Main}:
\ab{ 
\M(m)\D &= [(1{-}m)\I + m \P ] \D =  m(\P {-} \I) \D + \D \\
&= \alpha \A + \beta \D,
}
where $\A =  (\P {-} \I) \D$, $\alpha = m$, and $\beta = 1$.    

Since $\evt (\P {-} \I) \D = (\evt - \evt) \D = \0$, we see that $\spb(\A) = \spb( (\P {-} \I) \D ) = 0$.   Cohen's theorem gives that $\spb(\alpha \A + \beta \D)$ is convex in $\beta$, and thus by Lemma \ref{Lemma:Main}, $\spb(\alpha \A + \beta \D)$ is convex and non-increasing in $\alpha$.  Application of Lemma \ref{Lemma:Main} therefore yields that $\rho(\M(m)\D )$ is convex and non-increasing in $m$, and $\dfinline{\rho(\M(m)\D) }{m} \leq 0$, the derivative existing for all $m > 0$ when $\M(m)\D$ is irreducible.

From Friedland \cite[Theorem 4.1]{Friedland:1981}, strict convexity in $\beta$ occurs if $\P$ is irreducible and $\D \neq c \I$, for any $c > 0$.  By Lemma \ref{Lemma:Main} this implies $\spb(\M(m))$ is strictly decreasing and strictly convex in $m$.  \qedsym

\begin{remark}{\rm
The core of Kirkland et al.'s proof is their Lemma 4.1, which can be stated as 
\ab{
\evt \A \, (\uv(\A) \circ \vv(\A)) \geq \uv(\A)\tr \A \, \vv(\A) = \spb(\A),
}
with equality only when $\evt \A = \spb(\A) \evt$, where $\uv(\A)\tr$ and $\vv(\A)$ are the left and right eigenvectors of $\A$ associated with the Perron root $\spb(\A)$, and $\uv \circ \vv$ is the componentwise (Schur\revision{-}Hadamard) product.   Without the equality condition, their result is a special case of \citet[Theorem 3.2.5]{Bapat:and:Raghavan:1997}, but to obtain the equality condition requires an approach their novel proof provides.
}
\end{remark}

\begin{remark}\rm Schreiber and Lloyd-Smith \cite[Appendix B, Lemma 1]{Schreiber:and:Lloyd-Smith:2009} followed the reverse path and extended Kirkland et al's result on $\spb(\M(m)\D)$ to the form $\spb(\alpha \A {+} \D)$, where $\A$ is essentially nonnegative and $\D$ any diagonal matrix.
\end{remark}
\ 

Lemma \ref{Lemma:Main} can also be used as a new proof of an inequality of Lindqvist,  the special case considered in \citep[Theorem 2, pp. 260--261]{Lindqvist:2002}.  
\ \\
\begin {Theorem}[Lindqvist \protect{\citep[Theorem 2, subcase]{Lindqvist:2002}}]
\ 

Let $\A$ be an \revision{irreducible} $n \times n$ real matrix such that 1) $A_{ij} \geq 0$ for $i \neq j$, and 2) The left and right eigenvectors of $\A$, $\uv(\A)\tr$ and $\vv(\A)$, associated with eigenvalue $\spb(\A)$, satisfy $\uv(\A)\tr \vv(\A) = 1$.  Let $\D$ be an  $n \times n$ real diagonal matrix.  
Then
\an{\revision{\label{eq:Lindqvist} }
\spb(\A {+} \D) -  \spb(\A)  \geq  \uv(\A)\tr \, \D \, \vv(\A).
}
\end{Theorem}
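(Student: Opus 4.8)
The plan is to realize the left-hand side of \eqref{eq:Lindqvist} as a limit of difference quotients of a convex function, and then to identify the relevant one-sided derivative by Perron--Frobenius perturbation theory. First I would set $f(x,y) \eqdef \spb(x\A + y\D)$ for $x > 0$ and $y \in \Reals$. Because every eigenvalue scales with a positive scalar multiple of the matrix, $\spb(\alpha M) = \alpha\,\spb(M)$ for $\alpha > 0$, so $f$ satisfies the homogeneity \eqref{eq:rescaler}. For each fixed $x > 0$ the matrix $x\A$ is again essentially nonnegative, so Cohen's theorem (Theorem \ref{Theorem:Cohen}) applies and shows that $f(x,y) = \spb(x\A + y\D)$ is convex in the diagonal perturbation $y\D$, hence convex in the scalar $y$ (a convex function restricted to a line stays convex). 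This is exactly hypothesis \eqref{eq:convexity} of the Dual Convexity Lemma (Lemma \ref{Lemma:Main}).

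Next I would note that the quantity to be bounded is $f(1,1) - f(1,0)$, since $f(1,1) = \spb(\A + \D)$ and $f(1,0) = \spb(\A)$. Convexity of $y \mapsto f(1,y)$ makes the difference quotient $[f(1,y) - f(1,0)]/y$ nondecreasing in $y > 0$; taking $y = 1$ and letting $y \downarrow 0$ therefore gives
\[
f(1,1) - f(1,0) \;\geq\; \lim_{y \downarrow 0} \frac{f(1,y) - f(1,0)}{y} = \left.\pf{}{y} f(1,y)\right|_{y=0}.
\]
This is the supporting-line inequality at the vertex $y = 0$ --- precisely the mechanism by which \eqref{eq:renormed} drives the Lemma, here read in the $y$ direction with $y=0$ as the reference endpoint of the convex combination.

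It remains to identify this one-sided derivative with $\uv(\A)\tr \D \vv(\A)$. Since $\A$ is irreducible and essentially nonnegative, Perron--Frobenius theory guarantees that $\spb(\A)$ is a \emph{simple} eigenvalue with strictly positive right and left eigenvectors $\vv(\A)$ and $\uv(\A)\tr$, which are also the corresponding eigenvectors of $x\A$ for every $x > 0$. First-order perturbation theory for the simple eigenvalue of $\A + y\D$ then gives that the derivative of $\spb(\A + y\D)$ at $y=0$ equals $\uv(\A)\tr \D \vv(\A)\big/\bigl(\uv(\A)\tr \vv(\A)\bigr)$, which reduces to $\uv(\A)\tr \D \vv(\A)$ by the normalization $\uv(\A)\tr \vv(\A) = 1$ assumed in the hypothesis. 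Combining this with the inequality above delivers \eqref{eq:Lindqvist}.

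I expect the main obstacle to be the rigorous justification of differentiability at $y = 0$ together with the perturbation formula: one must invoke simplicity of the Perron eigenvalue (from irreducibility) and analytic perturbation theory to ensure that $y \mapsto \spb(\A + y\D)$ is differentiable at $0$ with the stated derivative, rather than merely possessing a one-sided derivative. Everything else --- the homogeneity, the transfer of convexity from the full diagonal to the scalar $y$ via Cohen's theorem, and the monotonicity of difference quotients --- is elementary. A minor point worth checking is that the right derivative supplied by convexity in fact coincides with the two-sided derivative here, which again follows from simplicity of the eigenvalue.
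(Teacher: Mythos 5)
Your proof is correct, but it is not the route the paper takes. You work in the \emph{diagonal} direction: you fix $\A$, use Cohen's theorem to get convexity of $g(y) = \spb(\A + y\D)$ in $y$, apply the supporting-line inequality $g(1)-g(0) \geq g'(0)$ at $y=0$, and identify $g'(0)=\uv(\A)\tr\D\vv(\A)$ by the first-order perturbation formula for the simple Perron eigenvalue. The paper instead works in the \emph{mixing} direction: it writes $\A = m\B - \D$ with $\B$ essentially nonnegative (so $m\B = \A+\D$), computes $\partial\, \spb(m\B - \D)/\partial m = \uv(\A)\tr\B\,\vv(\A)$ by the same perturbation formula, and then bounds this derivative by $\spb(\B)$ using item 3 of the Dual Convexity Lemma (Lemma \ref{Lemma:Main}) applied to Cohen's theorem; unwinding the algebra with $\uv(\A)\tr\A\vv(\A)=\spb(\A)$ gives \eqref{eq:Lindqvist}. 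Both arguments rest on the same two ingredients --- Cohen convexity and the eigenvalue derivative --- but yours never needs the Dual Convexity Lemma or the homogeneity $\spb(\alpha M)=\alpha\,\spb(M)$ (your opening paragraph about \eqref{eq:rescaler} is in fact never used afterward and could be deleted). What your version buys is brevity and self-containedness: it is the classical tangent-line argument for a convex spectral function. What the paper's version buys is illustrative value: the point of that section is precisely to show that Lemma \ref{Lemma:Main} \emph{also} yields Lindqvist's inequality, so the detour through the $m$-direction is deliberate. Your closing caveats are the right ones, and they are discharged exactly as you say: irreducibility makes $\spb(\A)$ simple (apply Perron--Frobenius to $\A + c\I$), the perturbed matrix $\A + y\D$ stays irreducible and essentially nonnegative so the analytic eigenvalue branch through $\spb(\A)$ remains the spectral bound for small $y$, and hence the two-sided derivative exists and equals the right derivative supplied by convexity.
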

\begin{proof}
Since $\A$ \revision{is an essentially nonnegative matrix, $\spb(\A)$ is an eigenvalue of multiplicity 1.  Consider the representation $\A = m \B - \D$, where $\B$ is essentially nonnegative and $m >0$.  Write $\spb \equiv \spb(\A)$.  As $\A$ is irreducible, $\uv \equiv \uv(\A)$, $\vv \equiv \vv(\A)$, with $\uvt \vv = 1$, $\evt \vv =1$, are unique, and the derivatives exist \cite{Deutsch:and:Neumann:1985} in the following} \cite[Sec. 9.1.1]{Caswell:2000}:
\ab{
\uv\tr \pf{( \A \vv)}{m} &= \uv\tr \left(\pf{\A} {m} \vv +  \A \pf{\vv} {m}\right) \\
&= \uv\tr \B \vv +  \spb \,   \uv\tr \pf{\vv}{m} 
\\
=\uv\tr  \pf{}{m}( \spb \, \vv ) 
& =  \uv\tr \left( \pf{\spb}{m} \, \vv + \spb \,   \pf{\vv}{m} \right) \\
&
= \pf{\spb}{m} + \spb \, \uv\tr   \pf{\vv}{m}.
}
Cancellation of terms $s \, \uv\tr \pfinline{\vv}{m}$ gives
\ab{
\pf{ \spb(\A)}{m} &= \uv(\A)\tr \pf{\A}{m} \vv (\A)
= \uv(\A)\tr \B \vv (\A) \leq \spb(\B),
}
the inequality coming from Lemma \ref{Lemma:Main}.  Scaling by $m$, subtracting $\D$, and substituting $m \B = \A + \D$, we get
\ab{
\uv(\A)\tr(m \B - \D) \vv (\A) = \spb(\A) &\leq \spb(m \B) - \uv(\A)\tr \D \vv(\A)\\
\iff
\uv(\A)\tr \D \vv(\A) & \leq \spb(\A {+} \D) - \spb(\A). \tag*{ \qedsym}
}
\end{proof}

\newcommand{\acknow}{I thank Prof. Shmuel Friedland for introducing me to the papers of Cohen, and for inviting me to speak on the early state of this work at the 16th International Linear Algebra Society Meeting in Pisa; Prof. Mustapha Mokhtar-Kharroubi for pointing out an error in a definition in the first version;  Laura Marie Herrmann for assistance with the literature search; and Arendt and Batty \citet{Arendt:and:Batty:1995} for guiding me to Kato \citet{Kato:1982:Superconvexity}.
}

\section*{Acknowledgements}
{\small \acknow}
{\footnotesize

}

\end{document}